\documentclass[12pt]{amsart}

\usepackage[utf8]{inputenc}
\usepackage{bm}
\usepackage[dvipsnames,svgnames,x11names]{xcolor}
\usepackage[hyphens]{url}
\usepackage[
    colorlinks=true,
    linkcolor=Maroon,
    citecolor=blue,
    urlcolor=blue,
    hypertexnames=false,
    linktocpage
]{hyperref}
\usepackage{mathrsfs}
\usepackage{bookmark}
\usepackage{amsmath,thmtools,mathtools}
\usepackage{amssymb}
\mathtoolsset{showonlyrefs=true}
\usepackage{graphicx}
\usepackage{tikz}
\usepackage{tikz-3dplot}
\usetikzlibrary{arrows.meta,calc,intersections}
\usepackage{fancyhdr}
\usepackage{comment}
\usepackage{esint}
\usepackage{enumerate}
\usepackage{caption}
\usepackage{needspace}

\allowdisplaybreaks

\declaretheorem[name=Theorem,numberwithin=section]{thm}
\declaretheorem[name=Remark,style=remark,sibling=thm]{rem}
\declaretheorem[name=Lemma,sibling=thm]{lemma}
\declaretheorem[name=Proposition,sibling=thm]{prop}

\declaretheorem[name=Question,numbered=no]{question}

\numberwithin{equation}{section}

\newcommand{\bbN}{\mathbb{N}}

\newcommand{\bbR}{\mathbb{R}}

\newcommand{\bbS}{\mathbb{S}}

\newcommand{\bbB}{\mathbb{B}}

\newcommand{\Sn}{\mathbb{S}^n}

\newcommand{\cF}{\mathcal{F}}

\newcommand{\cL}{\mathcal{L}}
\newcommand{\cM}{\mathcal{M}}
\newcommand{\cN}{\mathcal{N}}

\DeclareMathOperator{\supp}{supp}

\newcounter{pasosdemo}

\begin{document}

\title[On a question by Firey]
{On a question by Firey concerning uniqueness}

\author[]{Carlos Cabezas-Moreno}

\begin{abstract}
We prove that the curvature equation
\[
\sum_{j=1}^n
\alpha_jE_j(\tau_{\cM})
=
\sum_{j=1}^n
\alpha_jE_j(\tau_{\cN})
\]
yields uniqueness up to translation for any two closed $C^2_+$
hypersurfaces
$\cM,\,\cN\hookrightarrow\bbR^{n+1}$ whenever
$(\alpha_1,\ldots,\alpha_n)\in\bbR_{\geq0}^n\setminus\{0\}$
is log-concave and has no internal zeros.
This gives an affirmative answer to a uniqueness question posed
by Firey in the $C^2_+$ class.
\end{abstract}

\maketitle

\section{Introduction}

Let $(\bbR^{n+1},\delta:=\langle\cdot,\cdot\rangle,\bar D)$ be the
standard Euclidean space equipped with its flat metric and connection,
and assume $n\geq2$ throughout the paper. Denote by
$(\bbS^n,\bar g,\bar\nabla)$ the unit sphere with its induced
round metric and Levi-Civita connection, and by $d\sigma$ its
Riemannian volume form.

Throughout the paper, a convex body is a compact convex subset
of $\bbR^{n+1}$ with non-empty interior.
It is of class $C^2_+$ if its boundary is a $C^2$ embedded
hypersurface with positive principal curvatures.

For a closed convex hypersurface
$\cM\hookrightarrow\bbR^{n+1}$ of class $C^2_+$, let
$X_{\cM}\colon\Sn\rightarrow\cM$ denote the inverse of the
Gauss map defined by the outward unit normal.
The support function of $\cM$ is given by
\begin{equation}
h_{\cM}(u)
=
\langle X_{\cM}(u),u\rangle,
\end{equation}
and the Gauss parametrization satisfies
\begin{equation}
X_{\cM}(u)
=
h_{\cM}(u)u+\bar\nabla h_{\cM}(u).
\end{equation}

For $f\in C^2(\Sn)$, define the self-adjoint endomorphism
$\tau[f]$ of $T\Sn$ by
\begin{equation}
\tau[f]^i_j
=
\bar g^{ik}
\left(
\bar\nabla_{kj}^2f+f\bar g_{kj}
\right).
\end{equation}
When $f=h_{\cM}$, the endomorphism $\tau[h_{\cM}]$ is positive
definite, and its eigenvalues are the principal radii of curvature
of $\cM$.
We write
\[
\tau_{\cM}:=\tau[h_{\cM}],
\qquad
\tau[f]_{ij}:=\bar\nabla_{ij}^2f+f\bar g_{ij}.
\]
Consider the curvature functional
\begin{equation}
\cF_\alpha(\cdot)
:=
\sum_{j=1}^n
\alpha_jE_j(\cdot),
\end{equation}
where, for $j\in\{0,\ldots,n\}$, $E_j$ is the $j$-th elementary
symmetric polynomial, normalized so that $E_j(1,\ldots,1)=1$.
Both $\cF_\alpha$ and the elementary symmetric polynomials $E_j$
are understood as functions of the eigenvalues of a self-adjoint
endomorphism of the tangent bundle of $\bbS^n$.

In this note we study a uniqueness problem posed by Firey
at the 1974 International Congress of Mathematicians in Vancouver
\cite{FireyICM74}, related to the kinematic measures for sets of
support figures defined in \cite{Firey74}.
The problem was subsequently studied by Schneider
\cite{Schneider75,Schneider76}. For $n=2$, uniqueness up to
translation for sufficiently smooth convex bodies follows from a
theorem of A.~D.~Alexandrov; see \cite[Sec.~9]{Firey74}.
In this dimension, the corresponding uniqueness result for general
measures was proved by Schneider \cite{Schneider76}.

Consider convex bodies $K$ and $L$, and denote their $i$-th area
measures by $S_i(K,\cdot)$ and $S_i(L,\cdot)$, respectively.
By \cite[Thm.~4.4.6]{Schneider14}, for all Borel sets
$\omega,\eta\subseteq\bbS^n$,
\begin{equation}
\begin{split}
&\int_{SO(n+1)}
S_k(K+\mathfrak{g}L,\omega\cap\mathfrak{g}\eta)
\,d\nu(\mathfrak{g})
\\
&\qquad=
\frac{1}{|\bbS^n|}
\sum_{i=0}^k
\binom{k}{i}
S_{k-i}(K,\omega)S_i(L,\eta),
\end{split}
\end{equation}
where $k\in\{1,\ldots,n\}$ and $\nu$ is the normalized Haar measure
on $SO(n+1)$.
In particular, taking $\eta=\bbS^n$, we obtain
\begin{equation}
\begin{split}
\mu_k(K,L,\omega)
&:=
\frac{|\bbS^n|}{n+1}
\int_{SO(n+1)}
S_k(K+\mathfrak{g}L,\omega)
\,d\nu(\mathfrak{g})
\\
&=
\sum_{i=0}^k
\binom{k}{i}
S_{k-i}(K,\omega)W_{n+1-i}(L),
\end{split}
\end{equation}
where $W_{n+1-i}(L)$ denotes the $(n+1-i)$-th quermassintegral
of $L$.

If $\cM=\partial K$ is of class $C^2_+$, the density of
$\mu_n(K,L,\cdot)$ with respect to $d\sigma$ is
\begin{equation}
\frac{d\mu_n(K,L,\cdot)}{d\sigma}
=
\sum_{j=0}^n
\binom{n}{j}W_{j+1}(L)E_j(\tau_{\cM}).
\label{eq:kinem:density}
\end{equation}

Firey asked the following:
\begin{question}
Given $L$, does the measure $\mu_n(K,L,\cdot)$ determine
$\partial K$ up to a translation?
\end{question}

This uniqueness problem can be generalized in the $C^2_+$ class
as follows. Consider a coefficient vector
\[
\alpha=(\alpha_1,\ldots,\alpha_n)
\in\bbR_{\geq0}^n\setminus\{0\}.
\]
Does the identity
\begin{equation}
\cF_\alpha(\tau_{\cM})
=
\cF_\alpha(\tau_{\cN})
\label{eq:Firey-equation}
\end{equation}
force the $C^2_+$ hypersurfaces $\cM$ and $\cN$ to differ by a
translation?
A partial result was obtained by Ivaki \cite{Ivaki23}, who answered
Firey's question in the even isotropic case.
In the $C^2_+$ setting, \eqref{eq:Firey-equation} is the density
formulation of the uniqueness problem for a linear combination
of area measures. It generalizes the setting of the
Alexandrov--Fenchel--Jessen theorem; see \cite{Chern} and
\cite[p.~454]{Schneider14}.

Here we provide an affirmative answer to Firey's question in the
$C^2_+$ class. Indeed, if
\[
\mu_n(K_1,L,\cdot)
=
\mu_n(K_2,L,\cdot)
\]
for a fixed $L$, then, after cancelling the $j=0$ term in
\eqref{eq:kinem:density}, this corresponds to choosing
\[
\alpha_j
=
\binom{n}{j}W_{j+1}(L),
\qquad
1\leq j\leq n.
\]
By the Alexandrov--Fenchel inequality and the log-concavity of the
binomial coefficients, this coefficient sequence is log-concave.
Moreover, all its entries are positive, so it has no internal zeros.

We prove uniqueness under the weaker assumptions that
$(\alpha_1,\ldots,\alpha_n)$ is log-concave, namely
\begin{equation}
\alpha_j^2
\geq
\alpha_{j-1}\alpha_{j+1},
\qquad
2\leq j\leq n-1,
\end{equation}
and has no internal zeros, meaning that
\[
\alpha_i\alpha_k>0,\quad i<j<k
\quad\Longrightarrow\quad
\alpha_j>0.
\]
Equivalently, the set $\{j:\alpha_j>0\}$ is an interval of integers.

Our main result is the following.

\begin{thm}
\label{thm:main}
Let $\cM,\,\cN\hookrightarrow\bbR^{n+1}$ be closed convex
hypersurfaces of class $C^2_+$.
Suppose that $\alpha_1,\ldots,\alpha_n\geq0$ are not all zero
and form a log-concave sequence with no internal zeros.
If
\begin{equation}
\sum_{j=1}^n
\alpha_jE_j(\tau_{\cM})
=
\sum_{j=1}^n
\alpha_jE_j(\tau_{\cN}),
\label{eq:main-equation}
\end{equation}
then there exists $v\in\bbR^{n+1}$ such that
\[
\cM=\cN+v.
\]
\end{thm}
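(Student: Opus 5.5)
Write $h=h_{\cM}$, $\ell=h_{\cN}$ and $f=h-\ell$. The goal is to show that $f$ is the restriction of a linear function, i.e. $\tau[f]\equiv0$. This gives $\cM=\cN+v$.

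\textbf{Step 1: reduce to mixed discriminants.} Write $E_j(\tau[h])-E_j(\tau[\ell])$ as an integral along the segment $\ell_t=\ell+tf$:
\[
E_j(\tau[h])-E_j(\tau[\ell])=\int_0^1 j\,D_j\bigl(\tau[f],\tau[\ell_t],\ldots,\tau[\ell_t]\bigr)\,dt .
\]
Here $D_j$ is the normalized mixed discriminant (polarization of $E_j$). Hypothesis \eqref{eq:main-equation} then says $L f:=\sum_j\alpha_j\int_0^1 jD_j(\tau[f],\tau[\ell_t],\dots)\,dt=0$. This is a linear, uniformly elliptic operator, since each $\tau[\ell_t]$ is positive definite.

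Instead of a maximum principle, I would use the standard integral identities. Multiply the equation by $h$ and by $\ell$ and integrate over $\bbS^n$. Use the symmetry of mixed discriminant integrals, i.e. the mixed volume identity $\int h\,D(\tau[g_1],\ldots,\tau[g_j])=\int g_1 D(\tau[h],\tau[g_2],\ldots)$. This gives
\[
\sum_j \alpha_j\bigl(V_j(h,\ldots,h,\ell)\text{-type terms}\bigr)=0 .
\]
Concretely, with mixed quantities $V(i,k)$ (mixed volumes with $i$ copies of $\cM$, $k$ of $\cN$, the rest the unit ball $B$), integrating the equation against $h$ and $\ell$ yields
\[
\sum_j\alpha_j\bigl(V(j+1,0)-V(1,j)\bigr)=0,\qquad \sum_j\alpha_j\bigl(V(j,1)-V(0,j+1)\bigr)=0 .
\]
In the unnormalized sense, the first reads $\sum_j\alpha_j\,W$-type terms with $h$ against $E_j(\tau_\cM)$ minus $h$ against $E_j(\tau_\cN)$.

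\textbf{Step 2: apply Alexandrov--Fenchel along each $j$.} For fixed $j$, Alexandrov--Fenchel with the remaining bodies equal to $B$ gives, through the Minkowski-type inequalities,
\[
V(1,j)^{j+1}\ge V(j+1,0)\,V(0,j+1)^{j},
\]
and symmetrically. Subtracting the two identities of Step 1 gives $\sum_j\alpha_j\bigl[V(j+1,0)+V(0,j+1)-V(1,j)-V(j,1)\bigr]=0$. I would like to show each bracket is $\le0$ termwise, with equality only for homothety. This is plausibly false termwise, and that is where log-concavity must enter.

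\textbf{Step 3: the main obstacle.} The obstacle is to combine indices using log-concavity of $\alpha$. The plan is the polynomial approach. Set $P(s)=\cF_\alpha$ evaluated on $\tau[(1-s)\ell+sh]$, integrated against a test function, and view it as a polynomial in $s$. Its coefficients are mixed volumes, and the Alexandrov--Fenchel inequalities make the coefficient sequences $V(i,k)$ log-concave in the index. A product of two log-concave sequences with no internal zeros is again log-concave with no internal zeros. So the convolution-type coefficients $\alpha_jV(\cdot)$ inherit log-concavity, and this is exactly why the hypotheses are needed. I would then show that the function $s\mapsto\int h_s\,\cF_\alpha(\tau[h_s])$, with $h_s=(1-s)\ell+sh$, has a concavity property, a root of its $(\cdot)$-norm analogous to Brunn--Minkowski. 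Equality of $\cF_\alpha$ at $s=0,1$ forces equality in this concavity. Equality in Alexandrov--Fenchel for $C^2_+$ bodies, in the case where all reference bodies are balls or $C^2_+$, forces $\cM$ and $\cN$ to be homothetic.

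\textbf{Step 4: from homothety to translation.} If $\cN=\lambda\cM+v$, then $E_j(\tau_\cN)=\lambda^jE_j(\tau_\cM)$. The equation becomes $\sum_j\alpha_j(\lambda^j-1)E_j(\tau_\cM)=0$ with $E_j(\tau_\cM)>0$. Every term has the sign of $\lambda-1$, so $\lambda=1$.
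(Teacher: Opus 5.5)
There is a genuine gap. Your Steps 1, 2 and 4 are correct, but they only set up the problem. The whole difficulty is in Step 3, and there no argument is actually given.

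\textbf{Step 2.} The termwise hope is definitely false, not merely ``plausibly'' so. For $\cN=\lambda\cM$ the $j$-th bracket equals $(1-\lambda)(1-\lambda^j)V(j+1,0)>0$ when $\lambda\neq1$, so no termwise sign exists.

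\textbf{Step 3.} The function $\Phi(s)=\int_{\Sn}h_s\cF_\alpha(\tau[h_s])\,d\sigma$ is not connected to the hypothesis. By symmetry of mixed volumes,
\[
\Phi'(s)=\sum_{k=1}^n(k+1)\alpha_k\int_{\Sn}f\,E_k(\tau[h_s])\,d\sigma .
\]
The equation, however, only controls integrals against the unweighted combination $\sum_k\alpha_kE_k$, evaluated at $\tau_{\cM}$ and $\tau_{\cN}$. So unless a single $\alpha_k$ is non-zero, nothing you have forces equality in any concavity inequality for $\Phi$. You also never say which concavity property you mean. Finally, log-concavity of scalar sequences such as $V(i,k)$ or $\alpha_jV(\cdot)$ is not the relevant structure. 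What is needed is a signature statement for a bilinear form on support functions: at most one positive eigenvalue, with its null directions identified.

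\textbf{The missing idea} is the factorization used in the paper. Apply $x^j-y^j=(x-y)\sum_{r=0}^{j-1}x^ry^{j-1-r}$ to mixed discriminants. The equation becomes $\sum_j\alpha_j\sum_{r}D(\tau[f],\tau_{\cM}[r],\tau_{\cN}[j-1-r],I[n-j])=0$, which brings in the polynomial $s_\alpha(x,y,z)=\sum_j\alpha_j\bigl(\sum_r x^ry^{j-1-r}\bigr)z^{n-j}$. Your two identities from Step 1 are then equivalent to
\[
V(K,K,s_\alpha(K,L,\bbB))=V(K,L,s_\alpha(K,L,\bbB))=V(L,L,s_\alpha(K,L,\bbB)).
\]
Hence equality holds automatically in $V(K,L,s)^2\ge V(K,K,s)V(L,L,s)$ with $s=s_\alpha(K,L,\bbB)$. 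No inequality has to be proved; what is needed is the rigidity of this equality case.

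This cannot be assembled from classical Alexandrov--Fenchel. Each summand $V(\cdot,\cdot,K[r],L[j-1-r],\bbB[n-j])$ is hyperbolic by classical Alexandrov--Fenchel, but non-negative combinations of hyperbolic forms need not be. The paper instead uses the Ross--S\"u{\ss}--Wannerer theorem for dually Lorentzian polynomials, which gives the inequality and homothety in the $C^2_+$ equality case. Its key lemma is that $s_\alpha$ is dually Lorentzian exactly when $\alpha$ is log-concave without internal zeros, and this is precisely where the hypotheses enter.

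Once homothety is known, your Step 4 correctly gives $\lambda=1$. It is slightly more direct than the paper's homogeneity argument.
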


The algebraic ingredient of the proof is the following homogeneous polynomial
\begin{equation}
s_\alpha(x,y,z)
=
\sum_{j=1}^n
\alpha_j
\left(
\sum_{r=0}^{j-1}x^ry^{j-1-r}
\right)z^{n-j},
\end{equation}
which arises from a factorization of
\eqref{eq:main-equation}.
We prove that the polynomial above is dually Lorentzian if and only if
$(\alpha_1,\ldots,\alpha_n)$ is log-concave and has no internal zeros.

Since $\deg s_\alpha=n-1$, the generalized Alexandrov--Fenchel
inequality of Ross--S\"u{\ss}--Wannerer \cite{RSW} applies in
$\bbR^{n+1}$. The curvature equation forces equality in this
inequality, and the characterization of the equality cases yields the desired
rigidity.

In Section~\ref{sec:spectral-linearization}, we also establish a
weighted spectral inequality, including its equality case, and
show that the kernel of the linearized curvature operator consists
precisely of the restrictions of linear functions to the unit sphere
$\Sn$.
Together with ellipticity and formal self-adjointness, this
description of the kernel ensures invertibility of the linearized
operator on the $L^2(d\sigma)$-orthogonal complement of this
subspace, in suitable H\"older spaces.
This provides the openness step for a continuity approach to the existence problem for the equation
\[
\cF_\alpha(\tau[h])=g,
\]
after fixing translations and requiring the prescribed function $g$
to be $L^2(d\sigma)$-orthogonal to the linear functions.

\section{Dually Lorentzian polynomials and the generalized
Alexandrov--Fenchel inequality}

Throughout this section we follow the notation and terminology
of \cite{BrandenHuh} and \cite{RSW}.

\subsection{Mixed discriminants and mixed volumes}

For self-adjoint endomorphisms $H_1,\ldots,H_n$ of an
$n$-dimensional Euclidean space, their mixed discriminant is
\begin{equation}
D(H_1,\ldots,H_n)
:=
\frac{1}{n!}
\left.
\frac{\partial^n}{\partial t_1\cdots\partial t_n}
\det\left(t_1H_1+\cdots+t_nH_n\right)
\right|_{t=0}.
\end{equation}

It is symmetric and multilinear, and satisfies
\[
D(H,\ldots,H)=\det H.
\]

We write $H[k]$ when $H$ is repeated $k$ times.
Let $I$ denote the identity endomorphism.
Since
\[
\begin{aligned}
\det(I+tH)
&=
\sum_{j=0}^n
\binom{n}{j}
D(H[j],I[n-j])t^j
\\
&=
\sum_{j=0}^n
\binom{n}{j}
E_j(H)t^j,
\end{aligned}
\]
comparison of coefficients gives
\begin{equation}
E_j(H)
=
D(H[j],I[n-j]).
\label{eq:E-D}
\end{equation}
If $K$ denotes the convex body bounded by $\cM$, then its
$j$-th area measure satisfies
\begin{equation}
dS_j(K)
=
D(\tau_{\cM}[j],I[n-j])\,d\sigma,
\end{equation}
where $j\in\{0,\dots,n\}$.
Moreover, if $K_0,\ldots,K_n\subset\bbR^{n+1}$ are convex bodies
of class $C^2_+$ with support functions $h_0,\ldots,h_n$, then
their mixed volume is given by
\begin{equation}
V(K_0,\ldots,K_n)
=
\frac{1}{n+1}
\int_{\Sn}
h_0D(\tau[h_1],\ldots,\tau[h_n])
\,d\sigma.
\label{eq:mixed-volume-formula}
\end{equation}
\subsection{Dually Lorentzian polynomials}

We recall here the definition of a Lorentzian polynomial from
\cite[Def.~2.1]{RSW}; see also \cite{BrandenHuh}.
Here $\bbN$ includes zero, and $e_1,\ldots,e_q$ denote the
standard coordinate vectors.

A set $J\subseteq\bbN^q$ is called $M$-convex if it satisfies
the following exchange property: whenever $\mu,\nu\in J$
and $\mu_i>\nu_i$, there exists an index $j$ such that
\[
\mu_j<\nu_j
\qquad\text{and}\qquad
\mu-e_i+e_j\in J.
\]

Let
\[
p(x)=\sum_{|\mu|=d}c_\mu x^\mu,
\qquad c_\mu\geq0,
\]
be homogeneous of degree $d\geq2$, where
$|\mu|:=\mu_1+\cdots+\mu_q$ and
$x^\mu:=x_1^{\mu_1}\cdots x_q^{\mu_q}$.
Its support is
\[
\supp p:=\{\mu\in\bbN^q:c_\mu>0\}.
\]
We call $p$ Lorentzian if $\supp p$ is $M$-convex and,
for every multi-index $\nu$ with $|\nu|=d-2$, the Hessian
of the quadratic polynomial
\[
\partial^\nu p
:=
\frac{\partial^{|\nu|}p}
{\partial x_1^{\nu_1}\cdots\partial x_q^{\nu_q}}
\]
has at most one positive eigenvalue, counted with multiplicity.
For degrees zero and one, every homogeneous polynomial
with non-negative coefficients is declared Lorentzian.

For a multi-index $\mu=(\mu_1,\ldots,\mu_q)\in\bbN^q$, write
\[
\mu!=\mu_1!\cdots\mu_q!.
\]
Define the normalization operator by
\begin{equation}
N(x^\mu)=\frac{x^\mu}{\mu!},
\end{equation}
extended to polynomials by linearity.
For a polynomial $s=s(x_1,\ldots,x_q)$, we call
$\kappa\in\bbN^q$ admissible if
$\deg_{x_i}s\leq\kappa_i$ for every $i$.
For such an admissible multi-index $\kappa$, define the dual of $s$ by
\begin{equation}
s^\vee(x)
=
N\left(
x^\kappa s(x_1^{-1},\ldots,x_q^{-1})
\right).
\end{equation}
The polynomial $s$ is dually Lorentzian if $s^\vee$ is Lorentzian.
This property is independent of the admissible choice of $\kappa$;
see \cite[Rem.~4.3]{RSW}.

We shall use the following characterization from
\cite[Ex.~2.26]{BrandenHuh}.
\begin{prop}
\label{prop:characterization-Lorentzian}
Let
\[
p(u,z)=\sum_{k=0}^m c_ku^kz^{m-k},
\qquad c_k\geq0,
\]
be non-zero.
Then $p$ is Lorentzian if and only if
$(c_0,\ldots,c_m)$ has no internal zeros and
\begin{equation}
\left(\frac{c_k}{\binom{m}{k}}\right)^2
\geq
\frac{c_{k-1}}{\binom{m}{k-1}}
\frac{c_{k+1}}{\binom{m}{k+1}},
\qquad
1\leq k\leq m-1.
\end{equation}
These inequalities express the ultra-log-concavity of
$(c_0,\ldots,c_m)$.
\end{prop}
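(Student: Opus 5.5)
The plan is to verify the two defining conditions of a Lorentzian polynomial directly for the bivariate form $p(u,z)$: the $M$-convexity of the support, and the Hessian condition on all quadratic derivatives $\partial^\nu p$ with $|\nu|=m-2$. We may assume $m\ge 2$. For $m\le1$ every polynomial with non-negative coefficients is Lorentzian by definition, and both conditions in the statement are then vacuous.

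\emph{Support.} Here $\supp p=\{(k,m-k):c_k>0\}\subseteq\bbN^2$. Take $\mu=(k,m-k)$ and $\nu=(l,m-l)$ in the support with $\mu_1>\nu_1$, i.e.\ $k>l$. The only index $j$ with $\mu_j<\nu_j$ is $j=2$. The exchange property therefore requires $\mu-e_1+e_2=(k-1,m-k+1)\in\supp p$, so $c_{k-1}>0$. The same must hold with the roles of the two coordinates swapped. Iterating, $M$-convexity holds if and only if $\{k:c_k>0\}$ is an interval of integers, which is exactly the condition of no internal zeros.

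\emph{Hessians.} Write $\nu=(a,b)$ with $a+b=m-2$ and put $j=a+1\in\{1,\dots,m-1\}$. Only the monomials $u^kz^{m-k}$ with $k\in\{j-1,j,j+1\}$ survive $\partial_u^a\partial_z^b$. A direct computation shows that the quadratic $q=\partial^\nu p$ has Hessian
\[
\begin{pmatrix}
c_{j+1}(j+1)!\,(m-j-1)! & c_j\, j!\,(m-j)!\\
c_j\, j!\,(m-j)! & c_{j-1}(j-1)!\,(m-j+1)!
\end{pmatrix}.
\]
This is a symmetric $2\times2$ matrix with non-negative diagonal. I claim it has at most one positive eigenvalue if and only if its determinant is $\le 0$. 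If the determinant is positive, the product of the diagonal entries exceeds the square of the off-diagonal entry, so both diagonal entries are positive. Then the trace and the determinant are both positive, and hence both eigenvalues are positive. Conversely, a non-positive determinant excludes two positive eigenvalues.

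Writing out the condition $\det\le0$ gives
\[
c_{j-1}c_{j+1}\,(j-1)!\,(j+1)!\,(m-j+1)!\,(m-j-1)!\le c_j^2\,(j!)^2\,((m-j)!)^2.
\]
Dividing by $(m!)^2$ turns this into
\[
\frac{c_{j-1}}{\binom{m}{j-1}}\,\frac{c_{j+1}}{\binom{m}{j+1}}\le\Bigl(\frac{c_j}{\binom{m}{j}}\Bigr)^2 .
\]
As $\nu$ ranges over all multi-indices with $|\nu|=m-2$, the index $j$ ranges over $1,\dots,m-1$, so this is precisely the ultra-log-concavity inequality for each $k=j$. Combining the two parts proves the equivalence.

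The only delicate point is the eigenvalue criterion in the degenerate case where some diagonal entries vanish. The argument above handles it: a positive determinant forces both diagonal entries to be positive. The remaining work is the factorial bookkeeping, which is routine.
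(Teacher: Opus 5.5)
Your proof is correct. The paper gives no argument of its own for this proposition; it only quotes \cite[Ex.~2.26]{BrandenHuh}.

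You instead verify the statement directly from the definition the paper adopts, namely $M$-convex support together with the Hessian condition on $\partial^\nu p$ for $|\nu|=m-2$. The steps all check out:
\begin{enumerate}
\item For a bivariate form, the support is $M$-convex exactly when $\{k:c_k>0\}$ is an interval.
\item With $a=j-1$ and $b=m-1-j$, the quadratic $\partial_u^{a}\partial_z^{b}p$ involves only $c_{j-1},c_j,c_{j+1}$.
\item A symmetric $2\times2$ matrix with non-negative diagonal has at most one positive eigenvalue if and only if its determinant is non-positive. You correctly use that a positive determinant forces both diagonal entries to be positive.
\item After dividing by $(m!)^2$, the determinant condition is exactly the ultra-log-concavity inequality at $k=j$.
\end{enumerate}
The factorial bookkeeping and the degenerate cases ($m\le1$, vanishing coefficients) are handled correctly.

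The citation buys brevity for a standard fact. Your route buys a self-contained argument phrased in the paper's own formulation of Lorentzianity, so no translation to the cited source's definition is needed. It also makes transparent where the binomial normalization in the inequality comes from: the factors $(j\pm1)!\,(m-j\mp1)!$ produced by differentiation.
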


\begin{rem}
Lorentzianity is preserved under linear substitutions with
non-negative coefficients, coordinatewise multidegree truncations,
and formal antiderivatives. The latter are defined on monomials by
\begin{equation}
\int x^\mu\,dx^\nu
=
\frac{\mu!}{(\mu+\nu)!}x^{\mu+\nu}.
\end{equation}
These preservation properties follow from
\cite[Thm.~3.1(1), Prop.~3.3 and Cor.~5.9]{RSW}.
Linear substitutions with non-negative coefficients also preserve
dual Lorentzianity \cite[Thm.~5.12]{RSW}.
\end{rem}

\subsection{The generalized Alexandrov--Fenchel inequality}

For a homogeneous polynomial of degree $d-2$,
\[
s(x_1,\ldots,x_m)=\sum_\gamma c_\gamma x^\gamma,
\]
we write
\begin{equation}
\begin{split}
V(K,L,s(C_1,\ldots,C_m))
:=
\sum_\gamma c_\gamma
V(K,L,C_1[\gamma_1],\ldots,C_m[\gamma_m]).
\end{split}
\end{equation}

We shall use the following theorem of Ross--S\"u{\ss}--Wannerer
\cite[Thm.~8.8]{RSW}.

\begin{thm}
\label{thm:RSW}
Let $s$ be a non-zero dually Lorentzian polynomial of degree
$d-2$, and let $K,L,C_1,\ldots,C_m\subset\bbR^d$ be convex bodies.
Then
\begin{equation}
\begin{split}
V(K,L,s(C_1,\ldots,C_m))^2
\geq{}&
V(K,K,s(C_1,\ldots,C_m))
V(L,L,s(C_1,\ldots,C_m)).
\end{split}
\end{equation}
If all the bodies involved are of class $C^2_+$, equality holds
if and only if $K$ and $L$ are homothetic.
\end{thm}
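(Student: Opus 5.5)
The statement is Theorem~8.8 of \cite{RSW}, so in the paper it is simply cited. Here is how I would reprove it from the Lorentzian-polynomial machinery of \cite{BrandenHuh,RSW}. The approach is to encode the mixed volumes in a single Lorentzian polynomial, reduce it to a quadratic form in two variables, and read off the inequality from the signature of that form.

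\textbf{Step 1: the volume polynomial.} Introduce variables $x_K,x_L,y_1,\ldots,y_m\geq0$ and set
\[
p(x_K,x_L,y)=\operatorname{vol}\Bigl(x_KK+x_LL+\sum_i y_iC_i\Bigr).
\]
By Minkowski's theorem this is a homogeneous polynomial of degree $d$ whose coefficients are multinomial multiples of mixed volumes. By Br\"and\'en--Huh \cite{BrandenHuh} it is Lorentzian.

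\textbf{Step 2: the differential-operator characterization.} I would use the characterization from \cite{RSW} that $s$ is dually Lorentzian exactly when $s(\partial_{y})$ maps Lorentzian polynomials to Lorentzian polynomials. The normalization $N$ in the definition of $s^\vee$ is precisely what makes this pairing compatible with the factorials produced by differentiation. Since $\deg s=d-2$, the polynomial $q:=s(\partial_y)p$ is a Lorentzian quadratic form. Setting $y=0$ is a linear substitution with non-negative coefficients, which preserves Lorentzianity by the Remark above. The result is
\[
q(x_K,x_L,0)=c\bigl(V(K,K,s(C))x_K^2+2V(K,L,s(C))x_Kx_L+V(L,L,s(C))x_L^2\bigr)
\]
for some constant $c>0$.

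\textbf{Step 3: the inequality.} A Lorentzian quadratic form with non-negative coefficients has at most one positive eigenvalue. For a $2\times2$ symmetric matrix with non-negative diagonal entries, this forces the determinant to be $\leq0$, which is exactly the claimed inequality. The degenerate case in which a coefficient vanishes I would handle directly from non-negativity.

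\textbf{Step 4: the equality case, which I expect to be the main obstacle.} Assume all bodies are of class $C^2_+$, and write $h_K,h_L$ for the support functions. By \eqref{eq:mixed-volume-formula} one has $V(K,L,s(C))=\frac1d\int h_K\,\mathcal{D}[h_L]\,d\sigma$. Here $\mathcal{D}[f]$ is obtained by expanding $s$ as a combination of mixed discriminants $D(\tau[f],\tau[h_{C_1}][\gamma_1],\ldots)$. Because $s\neq0$ has non-negative coefficients, $\mathcal{D}$ is a second-order linear elliptic operator, and it is formally self-adjoint by the symmetry of mixed volumes.

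Equality in the quadratic inequality means that $f:=h_L-\lambda h_K$ is a null vector of the form for a suitable $\lambda>0$. Since the form has Lorentzian signature and is positive on $h_K$, a variational argument shows that $f$ lies in the kernel of $\mathcal{D}$. It then remains to prove that $\ker\mathcal{D}$ consists exactly of the restrictions of linear functions, which would give that $K$ and $L$ are homothetic. For this I would first try Alexandrov's approach via the strict mixed-discriminant (Hilbert-type) inequality applied pointwise. Failing that, I would use a continuity argument: deform all $C_i$ to balls, where $\mathcal{D}$ becomes a positive multiple of $\Delta+n$ with kernel the linear functions. I would then use the strictness of the Lorentzian property on the interior to show the kernel dimension cannot jump along the path.
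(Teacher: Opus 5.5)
Your Steps 1--3 are a correct proof of the inequality. They take from \cite{RSW} that $s(\partial)$ preserves Lorentzian polynomials when $s$ is dually Lorentzian, and from Br\"and\'en--Huh that the volume polynomial is Lorentzian. The first half of Step 4 is also right. Take $\lambda=V(K,L,s(C))/V(K,K,s(C))$. Then $f=h_L-\lambda h_K$ is $Q$-null and $Q$-orthogonal to $h_K$. Since $Q\le0$ on the $Q$-orthogonal complement of $h_K$ (apply Step 3 to $u+th_K$), $f$ maximizes $Q$ there. Hence $Q(f,\cdot)\equiv0$ and $\mathcal{D}[f]=0$.

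The gap is the remaining claim $\ker\mathcal{D}=\mathcal{H}_1$. Given your reduction, this claim is \emph{equivalent} to the equality statement. The paper's Lemma~\ref{lem:local-generalized-AF} and the kernel proposition in Section~\ref{sec:spectral-linearization} run exactly the reverse implication, deriving it from Theorem~\ref{thm:RSW}. In other words, it is the Hodge--Riemann relations with their equality case, cf.\ \cite[Thm.~8.9(2)]{RSW}. Your proposal lists two strategies for it but carries out neither.

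Neither strategy works as stated.

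\textbf{Continuity.} Deform along $C_i^t=(1-t)C_i+t\bbB$. Step 3 already gives $\mathcal{D}_t$ exactly one positive eigenvalue for every $t$, and $\mathcal{H}_1\subseteq\ker\mathcal{D}_t$ always. So the set of $t$ with $\ker\mathcal{D}_t=\mathcal{H}_1$ is open. Nothing makes it closed: a negative eigenvalue can rise to $0$ at some $t_0$ and drop again without changing any eigenvalue count. Pointwise non-degeneracy of the algebraic forms $(X,Y)\mapsto D(X,Y,r(A))$ does not pass to the global operator for free; that passage is exactly what has to be proved.

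\textbf{Alexandrov's pointwise argument.} Write $A_j:=\tau[h_{C_j}]$. Classically $s$ is a monomial, so one body splits off: $s=x_ir$. Then $\mathcal{D}[f]=D(\tau[f],A_i,r(A))$, and
\[
0=\int f\,\mathcal{D}[f]\,d\sigma=\int h_{C_i}D(\tau[f],\tau[f],r(A))\,d\sigma .
\]
The integrand is pointwise non-positive by the mixed-discriminant inequality, and vanishes only where $\tau[f]=0$.

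A general dually Lorentzian $s$ has no linear factor. Already $s_\alpha$ with $n=3$, $\alpha=(1,1,1)$ is $x^2+y^2+z^2+xy+yz+zx$, a positive definite quadratic form. Using Euler's identity you only obtain
\[
\int D\Bigl(\tau[f],\tau[f],\sum_i h_{C_i}\partial_is(A)\Bigr)\,d\sigma=0 .
\]
To conclude, you would need the following pointwise implication for all $w\in\bbR^m_{>0}$: $D(X,s(A))=0$ forces $D(X,X,\sum_iw_i\partial_is(A))\le0$, with equality only for $X=0$. This does not follow from the mixed-discriminant inequality applied term by term. Only the sum of the terms $D(X,A_i,\partial_is(A))$ vanishes, not each one, so the term-wise bounds $D(X,A_i,\partial_is(A))^2/D(A_i,A_i,\partial_is(A))$ are non-negative rather than zero. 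A strict Hodge--Riemann input of this kind for dually Lorentzian $s$ is what your argument still lacks, and it is the real content of the equality case.
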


\subsection{The relevant polynomial}

Set $w:=h_{\cM}-h_{\cN}$.
By linearity of the curvature tensor,
\begin{equation}
\tau_{\cM}-\tau_{\cN}
=
\tau[w].
\label{eq:tau-difference}
\end{equation}
Using \eqref{eq:E-D}, equation \eqref{eq:main-equation} becomes
\begin{equation}
\sum_{j=1}^n
\alpha_jD(\tau_{\cM}[j],I[n-j])
=
\sum_{j=1}^n
\alpha_jD(\tau_{\cN}[j],I[n-j]).
\end{equation}
For every $1\leq j\leq n$, multilinearity and symmetry give
\begin{equation}
\begin{split}
&D(\tau_{\cM}[j],I[n-j])
-
D(\tau_{\cN}[j],I[n-j])
\\
&\qquad=
\sum_{r=0}^{j-1}
D(
\tau_{\cM}-\tau_{\cN},
\tau_{\cM}[r],
\tau_{\cN}[j-1-r],
I[n-j]).
\end{split}
\label{eq:telescoping}
\end{equation}
This is the mixed-discriminant analogue of
\[
x^j-y^j
=
(x-y)\sum_{r=0}^{j-1}x^ry^{j-1-r}.
\]

Accordingly, define
\begin{equation}
s_\alpha(x,y,z)
:=
\sum_{j=1}^n
\alpha_j
\left(
\sum_{r=0}^{j-1}x^ry^{j-1-r}
\right)z^{n-j}.
\end{equation}
This polynomial is homogeneous of degree $n-1$ and satisfies
\[
(x-y)s_\alpha(x,y,z)
=
\sum_{j=1}^n
\alpha_j(x^j-y^j)z^{n-j}.
\]
Combining \eqref{eq:tau-difference} and \eqref{eq:telescoping},
we obtain the exact factorization
\begin{equation}
\sum_{j=1}^n
\alpha_j
\sum_{r=0}^{j-1}
D(
\tau[w],
\tau_{\cM}[r],
\tau_{\cN}[j-1-r],
I[n-j])
=
0
\label{eq:exact-factorization}
\end{equation}
pointwise on $\Sn$.

\section{Log-concavity and dual Lorentzianity}

The following characterization is the algebraic ingredient of
the proof.

\begin{prop}
\label{prop:secant-DL}
Let $\alpha_1,\ldots,\alpha_n\geq0$ be not all zero.
Then $s_\alpha$ is dually Lorentzian if and only if
$(\alpha_1,\ldots,\alpha_n)$ is log-concave and has no internal zeros.
\end{prop}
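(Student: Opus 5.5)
The plan is to compute the dual $s_\alpha^\vee$ explicitly for a convenient admissible multi-index, and to reduce both directions to the bivariate criterion of Proposition~\ref{prop:characterization-Lorentzian}. Since $\deg_x s_\alpha,\deg_y s_\alpha,\deg_z s_\alpha\leq n-1$, I take $\kappa=(n-1,n-1,n-1)$; this is legitimate because dual Lorentzianity does not depend on the admissible $\kappa$ \cite[Rem.~4.3]{RSW}. The monomial $x^ry^{j-1-r}z^{n-j}$, with $0\le r\le j-1$, is sent to
\[
\frac{x^{n-1-r}\,y^{n-j+r}\,z^{j-1}}{(n-1-r)!\,(n-j+r)!\,(j-1)!}.
\]
Put $a=n-1-r$ and $b=n-j+r$. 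Then $a+b=2n-1-j$, and as $r$ runs over $0,\dots,j-1$ the exponent $a$ runs exactly over the values with $a\le n-1$ and $b\le n-1$. Hence
\[
s_\alpha^\vee(x,y,z)=T_{(n-1,n-1,\infty)}\Bigl[\,p(x+y,z)\Bigr],
\qquad
p(u,z):=\sum_{j=1}^n\alpha_j\frac{u^{2n-1-j}z^{j-1}}{(2n-1-j)!\,(j-1)!},
\]
where $T$ denotes truncation to degree at most $n-1$ in $x$ and in $y$.

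\emph{Sufficiency.} I apply Proposition~\ref{prop:characterization-Lorentzian} to $p$, which has degree $m=2n-2$ and index $k=j-1$. The coefficient is $c_k=\alpha_j/((j-1)!(2n-1-j)!)$, so
\[
\frac{c_k}{\binom{m}{k}}=\frac{\alpha_{k+1}}{(2n-2)!}.
\]
Thus ultra-log-concavity of $(c_k)$ is exactly log-concavity of $(\alpha_j)$, and the two "no internal zeros" conditions coincide. So if $\alpha$ is log-concave with no internal zeros, then $p$ is Lorentzian. The substitution $u=x+y$ has non-negative coefficients and preserves Lorentzianity, and so does the coordinatewise truncation $T$ (see the Remark above). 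Therefore $s_\alpha^\vee$ is Lorentzian.

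\emph{Necessity.} Suppose $s_\alpha^\vee$ is Lorentzian. I extract a bivariate polynomial by differentiating. The monomials of $s_\alpha^\vee$ with $x$-degree exactly $n-1$, the maximal degree, are those with $r=0$, namely
\[
\alpha_j\,\frac{x^{n-1}y^{n-j}z^{j-1}}{(n-1)!\,(n-j)!\,(j-1)!}.
\]
Consequently
\[
q(y,z):=\partial_x^{\,n-1}s_\alpha^\vee=\sum_{j=1}^n\alpha_j\frac{y^{n-j}z^{j-1}}{(n-j)!\,(j-1)!}.
\]
Partial derivatives preserve Lorentzianity (Brändén--Huh \cite{BrandenHuh}), so $q$ is Lorentzian, and it is non-zero since $\alpha\neq0$. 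Now $q$ has degree $m=n-1$ and index $k=j-1$, and $c_k/\binom{n-1}{k}=\alpha_{k+1}/(n-1)!$. Proposition~\ref{prop:characterization-Lorentzian} then gives that $\alpha$ has no internal zeros and is log-concave.

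The main point needing care is the bookkeeping in the first step: one must check that the chosen $\kappa$ makes the dual coincide exactly with a truncation of $p(x+y,z)$, with no missing or extra terms and the right factorial weights. One must also make sure that the tools invoked (truncation, non-negative substitution, and differentiation) are available in the cited form. Derivative-stability is standard for Lorentzian polynomials but is not listed in the Remark. If it were unavailable, I would instead obtain $q$ as a limit: substitute $x\mapsto tx$, rescale by $t^{-(n-1)}$, and let $t\to\infty$. This isolates the top $x$-degree part, and Lorentzian polynomials are closed under limits.
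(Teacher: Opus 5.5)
Your proof is correct and follows essentially the paper's route: you identify $s_\alpha^\vee$ (for $\kappa=(n-1,n-1,n-1)$) with a coordinatewise truncation of $p(x+y,z)$, where your $p$ is exactly the paper's antiderivative $q$, and you reduce both directions to the Br\"and\'en--Huh bivariate criterion. The differences are minor. For sufficiency, you check Lorentzianity of that polynomial directly from the criterion instead of passing through $p^\vee$ and a formal antiderivative; the trailing zeros $c_k=0$ for $k\ge n$ are harmless there. For necessity, you take $\partial_x^{n-1}s_\alpha^\vee$ rather than setting $y=0$ in $s_\alpha$. This is the dual-side counterpart of the paper's specialization (using the $x\leftrightarrow y$ symmetry) and yields the same bivariate polynomial $p^\vee$.
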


\begin{proof}
Set $m:=n-1$ and $a_k:=\alpha_{k+1}$ for $0\leq k\leq m$.
Then
\begin{equation}
s_\alpha(x,y,z)
=
\sum_{k=0}^m
a_k
\left(
\sum_{\ell=0}^k x^\ell y^{k-\ell}
\right)z^{m-k}.
\label{eq:s-expanded}
\end{equation}
Consider the bivariate polynomial
\[
p(u,z)=\sum_{k=0}^m a_ku^kz^{m-k}.
\]
Its dual with respect to the admissible multidegree $(m,m)$ is
\begin{equation}
p^\vee(u,z)
=
\sum_{k=0}^m
\frac{a_k}{(m-k)!k!}
u^{m-k}z^k.
\label{eq:p-dual}
\end{equation}
The coefficient of $u^rz^{m-r}$, divided by $\binom mr$, is
$a_{m-r}/m!$.
Thus Proposition~\ref{prop:characterization-Lorentzian} shows that
$p^\vee$ is Lorentzian if and only if
$(a_0,\ldots,a_m)$ is log-concave and has no internal zeros.

Assume first that these conditions hold.
Taking the formal antiderivative of $p^\vee$ with multi-index
$(m,0)$, we obtain
\[
q(u,z)
:=
\int p^\vee(u,z)\,du^m
=
\sum_{k=0}^m
\frac{a_k}{(2m-k)!k!}
u^{2m-k}z^k.
\]
By \cite[Cor.~5.9]{RSW}, $q$ is Lorentzian.
The substitution $u=x+y$ preserves Lorentzianity by
\cite[Thm.~3.1(1)]{RSW}, so
\begin{equation}
\begin{split}
Q(x,y,z)
&:=
q(x+y,z)
\\
&=
\sum_{k=0}^m
\sum_{r=0}^{2m-k}
\frac{a_k}{k!r!(2m-k-r)!}
x^ry^{2m-k-r}z^k
\end{split}
\label{eq:Q-expanded}
\end{equation}
is Lorentzian.

Truncate $Q$ coordinatewise to multidegree $(m,m,m)$.
By \cite[Prop.~3.3]{RSW}, the resulting polynomial remains
Lorentzian.
For fixed $k$, a term in \eqref{eq:Q-expanded} survives precisely
when
\[
r\leq m,
\qquad
2m-k-r\leq m,
\]
or equivalently $m-k\leq r\leq m$.
We then write $r=m-\ell$, so that
\begin{equation}
Q_{\leq(m,m,m)}
=
\sum_{k=0}^m
\sum_{\ell=0}^k
\frac{a_k\,x^{m-\ell}y^{m-k+\ell}z^k}
{k!(m-\ell)!(m-k+\ell)!}.
\end{equation}

The multidegree $(m,m,m)$ is admissible for $s_\alpha$.
The dual of \eqref{eq:s-expanded} with respect to this
multidegree is
\begin{equation}
s_\alpha^\vee(x,y,z)
=
\sum_{k=0}^m
\sum_{\ell=0}^k
\frac{a_k\,x^{m-\ell}y^{m-k+\ell}z^k}
{k!(m-\ell)!(m-k+\ell)!}.
\end{equation}
Hence $s_\alpha^\vee=Q_{\leq(m,m,m)}$ is Lorentzian, and
$s_\alpha$ is dually Lorentzian.

Conversely, suppose that $s_\alpha$ is dually Lorentzian.
By \cite[Thm.~5.12]{RSW}, setting $y=0$ preserves dual
Lorentzianity. Therefore
\[
s_\alpha(x,0,z)
=
\sum_{k=0}^m a_kx^kz^{m-k}
=
p(x,z)
\]
is dually Lorentzian.
By \cite[Rem.~4.3]{RSW}, its dual with respect to $(m,m)$,
given in \eqref{eq:p-dual}, is Lorentzian.
It follows from the characterization above that
$(a_0,\ldots,a_m)$, and hence
$(\alpha_1,\ldots,\alpha_n)$, is log-concave and has no
internal zeros.
\end{proof}

\section{Proof of the main theorem}

\begin{proof}[Proof of Theorem~\ref{thm:main}]
Let $K$ and $L$ be the convex bodies bounded by $\cM$ and $\cN$,
respectively, and let $\bbB$ denote the Euclidean unit ball of
$\bbR^{n+1}$.

Multiplying \eqref{eq:exact-factorization} by $h_{\cM}$,
integrating over $\Sn$, and using
\eqref{eq:mixed-volume-formula}, we obtain
\[
V(K,K,s_\alpha(K,L,\bbB))
=
V(K,L,s_\alpha(K,L,\bbB)).
\]
Multiplying instead by $h_{\cN}$, we find
\[
V(L,K,s_\alpha(K,L,\bbB))
=
V(L,L,s_\alpha(K,L,\bbB)).
\]
By symmetry of mixed volumes,
\begin{equation}
\begin{split}
V(K,K,s_\alpha(K,L,\bbB))
&=
V(K,L,s_\alpha(K,L,\bbB))
\\
&=
V(L,L,s_\alpha(K,L,\bbB)).
\end{split}
\label{eq:equal-mixed-volumes}
\end{equation}
Consequently,
\begin{equation}
\begin{split}
V(K,L,s_\alpha(K,L,\bbB))^2
=
V(K,K,s_\alpha(K,L,\bbB))
V(L,L,s_\alpha(K,L,\bbB)).
\end{split}
\label{eq:AF-equality}
\end{equation}

By Proposition~\ref{prop:secant-DL}, $s_\alpha$ is a non-zero
dually Lorentzian polynomial of degree
\[
n-1=(n+1)-2.
\]
Thus \eqref{eq:AF-equality} is the equality case of
Theorem~\ref{thm:RSW} in $\bbR^{n+1}$.
Since $\cM$, $\cN$, and $\bbS^n$ are of class $C^2_+$, there exist
$\lambda>0$ and $v\in\bbR^{n+1}$ such that
\[
\cM=\lambda\cN+v.
\]

Keeping the arguments in $s_\alpha(K,L,\bbB)$ fixed, translation
invariance and homogeneity in the first argument give
\[
V(K,L,s_\alpha(K,L,\bbB))
=
\lambda V(L,L,s_\alpha(K,L,\bbB)).
\]
By \eqref{eq:equal-mixed-volumes}, the two mixed volumes in this
identity coincide.
Moreover,
\[
V(L,L,s_\alpha(K,L,\bbB))>0,
\]
since the coefficients of $s_\alpha$ are non-negative and not all
zero, and all the bodies have non-empty interior.
Therefore $\lambda=1$, and hence $\cM=\cN+v$.
\end{proof}

\section{Spectral inequality and linearization}
\label{sec:spectral-linearization}

Throughout this section, we assume that
$\alpha_1,\ldots,\alpha_n\geq0$ are not all zero and form a
log-concave sequence with no internal zeros.
All hypersurfaces are closed and smooth with positive principal
curvatures. In particular, their support functions satisfy
\[
\tau[h]>0.
\]
All functions considered below are smooth.

The proof of Theorem~\ref{thm:main} uses the global equality case
of the generalized Alexandrov--Fenchel inequality.
We now derive a weighted spectral inequality and determine the
kernel of the linearized curvature operator.

\subsection{The linearized operator}

Let $h$ be the support function of a smooth convex hypersurface
$\cM$ satisfying $\tau[h]>0$, and let $\psi\in C^\infty(\Sn)$.
There exists $\varepsilon>0$ such that, for every
$t\in(-\varepsilon,\varepsilon)$, the function $h+t\psi$ is the
support function of a smooth convex hypersurface $\cM_t$ with
$\tau[h+t\psi]>0$.
Define
\begin{equation}
\cL_\alpha(h)[\psi]
:=
\left.\frac{d}{dt}\right|_{t=0}
\cF_\alpha(\tau[h+t\psi]).
\end{equation}
Thus $\cL_\alpha(h)$ is the linearization at $h$ of the nonlinear
map $h\mapsto\cF_\alpha(\tau[h])$ on smooth support functions.

For every self-adjoint endomorphism $H$ of $T\Sn$, define
$\cF_\alpha^{ij}(\tau[h])$ by
\begin{equation}
\begin{split}
\left.\frac{d}{dt}\right|_{t=0}
\cF_\alpha(\tau[h]+tH)
&=
\cF_\alpha^{ij}(\tau[h])H_{ij}
\\
&=
\sum_{k=1}^n
k\alpha_k
D(H,\tau[h][k-1],I[n-k]).
\end{split}
\label{eq:Falpha-first-derivative}
\end{equation}
Thus $\cF_\alpha^{ij}(\tau[h])$ is the symmetric tensor
representing the differential of $\cF_\alpha$ at $\tau[h]$.
Given $\lambda=(\lambda_1,\ldots,\lambda_n)\in\bbR^n$, let
$E_{j;i}(\lambda)$ denote the normalized $j$-th elementary
symmetric polynomial in the $n-1$ variables obtained by omitting
$\lambda_i$. For $1\leq j\leq n-1$,
\begin{equation}
E_{j;i}(\lambda)
=
\binom{n-1}{j}^{-1}
\sum_{\substack{
1\leq i_1<\cdots<i_j\leq n\\
i_1,\ldots,i_j\neq i
}}
\lambda_{i_1}\cdots\lambda_{i_j},
\end{equation}
and we set $E_{0;i}=1$.
The standard identities for elementary symmetric polynomials,
with this normalization, give
\begin{equation}
\frac{\partial E_j}{\partial\lambda_i}
=
\frac{j}{n}E_{j-1;i},
\qquad
1\leq j\leq n,
\label{eq:Ej-derivative}
\end{equation}
and
\begin{equation}
\sum_{i=1}^n E_{j;i}
=
nE_j,
\qquad
0\leq j\leq n-1;
\end{equation}
see \cite[Prop.~2.2]{HuiskenSinestrari99}.
In particular,
\begin{equation}
E_{j;i}(\lambda)>0
\qquad
\text{if }\lambda\in\bbR_{>0}^n,
\quad 0\leq j\leq n-1.
\label{eq:Ej-omitted-positive}
\end{equation}

We now compute an explicit expression for this operator.

\begin{lemma}
At a point of $\Sn$, choose a $\bar g$-orthonormal frame
$\{e_i\}_{i=1}^n$ diagonalizing $\tau[h]$, and denote its
eigenvalues by $\lambda_1,\ldots,\lambda_n$.
Then
\begin{equation}
\begin{split}
\cL_\alpha(h)[\psi]
&=
\cF_\alpha^{ij}(\tau[h])\tau[\psi]_{ij}
\\
&=
\frac{1}{n}
\sum_{i=1}^n
\left(
\sum_{k=1}^n k\alpha_kE_{k-1;i}(\lambda)
\right)
\tau[\psi](e_i,e_i).
\end{split}
\label{eq:Falpha-linearization}
\end{equation}
The operator $\cL_\alpha(h)$ is elliptic.
\end{lemma}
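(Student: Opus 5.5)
The plan is to differentiate the nonlinear map along the path $h+t\psi$ and then diagonalize at a point. Since $\tau[\cdot]$ is linear in its argument, $\tau[h+t\psi]=\tau[h]+t\tau[\psi]$. The chain rule, together with the definition \eqref{eq:Falpha-first-derivative} applied with $H=\tau[\psi]$, gives
\[
\cL_\alpha(h)[\psi]=\cF_\alpha^{ij}(\tau[h])\tau[\psi]_{ij}.
\]
This is the first equality in \eqref{eq:Falpha-linearization}.

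For the second equality, I will fix a point of $\Sn$ and work in a $\bar g$-orthonormal frame $\{e_i\}$ diagonalizing $\tau[h]$ with eigenvalues $\lambda_i$. The task is to compute the derivative of $\cF_\alpha$ at the diagonal matrix $\Lambda=\mathrm{diag}(\lambda_1,\ldots,\lambda_n)$ in the direction of an arbitrary symmetric $H$. Each $E_k$, viewed as a function of matrices, is a coefficient of $\det(I+t\,\cdot)$. The standard fact is that the differential of such a function at a diagonal matrix is diagonal, with $i$-th entry $\partial E_k/\partial\lambda_i$. One way to see this: write $E_k(\Lambda+sH)$ as a sum of principal $k\times k$ minors, normalized by $\binom nk^{-1}$. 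Differentiating at $s=0$, the off-diagonal entries of $H$ only enter at second order, so only the diagonal entries $H_{ii}$ contribute, with coefficient $\partial E_k/\partial\lambda_i$. Then \eqref{eq:Ej-derivative} gives
\[
\left.\frac{d}{ds}\right|_{s=0}E_k(\Lambda+sH)=\frac{k}{n}\sum_i E_{k-1;i}(\lambda)H_{ii}.
\]
Summing against $\alpha_k$ and taking $H=\tau[\psi]$, so that $H_{ii}=\tau[\psi](e_i,e_i)$, yields the stated formula. Equivalently, one can check directly from multilinearity that $kD(H,\Lambda[k-1],I[n-k])$ equals this expression, since for diagonal $\Lambda$ the mixed discriminant picks out $H_{ii}$ times the normalized symmetric function of the remaining eigenvalues. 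This is the only step needing care, namely keeping the normalization constants consistent, but it is routine.

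For ellipticity, note that the principal part of $\cL_\alpha(h)$ is $\cF_\alpha^{ij}\bar\nabla^2_{ij}\psi$, because $\tau[\psi]_{ij}=\bar\nabla^2_{ij}\psi+\psi\bar g_{ij}$. So it suffices to show that the tensor $\cF_\alpha^{ij}$ is positive definite. In the diagonalizing frame its eigenvalues are
\[
\frac1n\sum_k k\alpha_kE_{k-1;i}(\lambda).
\]
Since $\tau[h]>0$ we have $\lambda\in\bbR^n_{>0}$, so \eqref{eq:Ej-omitted-positive} gives $E_{k-1;i}(\lambda)>0$ for all $k$. As the $\alpha_k\ge0$ are not all zero, each eigenvalue is strictly positive. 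These eigenvalues depend continuously on the point of the compact manifold $\Sn$, so they are bounded below by a positive constant, and $\cL_\alpha(h)$ is uniformly elliptic.
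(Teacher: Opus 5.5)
Your proposal is correct and follows the paper's proof essentially step for step. You use linearity of $\tau$ for the first identity, the formula $\partial E_k/\partial\lambda_i=\frac{k}{n}E_{k-1;i}$ in a diagonalizing frame for the second, and positivity of $E_{k-1;i}(\lambda)$ for ellipticity. Your principal-minor argument makes explicit why only the diagonal entries $\tau[\psi](e_i,e_i)$ contribute, which the paper covers by noting that $E_k$ is a polynomial in the matrix entries so repeated eigenvalues cause no trouble, and your compactness remark upgrades ellipticity to uniform ellipticity.
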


\begin{proof}
The first identity follows from the linearity of $\tau$ and
\eqref{eq:Falpha-first-derivative}.
By \eqref{eq:Ej-derivative}, in the chosen frame we have
\[
\left.\frac{d}{dt}\right|_{t=0}
E_k(\tau[h]+t\tau[\psi])
=
\frac{k}{n}
\sum_{i=1}^n
E_{k-1;i}(\lambda)\tau[\psi](e_i,e_i).
\]
Summing with coefficients $\alpha_k$ proves the second identity.
This formula also holds at repeated eigenvalues, since $E_k$
is a polynomial in the entries of the endomorphism.
Since $\tau[h]>0$, \eqref{eq:Ej-omitted-positive} implies
\[
\sum_{k=1}^n k\alpha_kE_{k-1;i}(\lambda)>0
\]
for every $i$.
Hence the principal coefficient tensor is positive definite,
and $\cL_\alpha(h)$ is elliptic.
\end{proof}

By \eqref{eq:E-D},
\begin{equation}
\cF_\alpha^{ij}(\tau[h])\tau[h]_{ij}
=
\sum_{k=1}^n
k\alpha_kE_k(\tau[h]).
\label{eq:Falpha-Euler}
\end{equation}

Since $\tau[h]$ is a Codazzi tensor, the Newton tensor associated
with each $E_k(\tau[h])$ is divergence-free; see
\cite[Lem.~18.30]{ACGL20}.
Hence, by linearity, 
\begin{equation}
\bar\nabla_i
\bigl(\cF_\alpha^{ij}(\tau[h])\bigr)
=
0.
\label{eq:Falpha-divergence-free}
\end{equation}

\subsection{A local generalized Alexandrov--Fenchel inequality}

The polynomial associated with the linearization of
$\cF_\alpha$ is obtained by restricting $s_\alpha$ to the
diagonal $x=y$:
\begin{equation}
q_\alpha(x,z)
:=
s_\alpha(x,x,z)
=
\sum_{j=1}^n
j\alpha_jx^{j-1}z^{n-j}.
\end{equation}
By Proposition~\ref{prop:secant-DL} and the preservation of dual
Lorentzianity under non-negative linear substitutions
\cite[Thm.~5.12]{RSW}, $q_\alpha$ is a non-zero dually Lorentzian
polynomial of degree $n-1$.

For $u_0,\ldots,u_n\in C^\infty(\Sn)$, extend the mixed-volume
notation by setting
\begin{equation}
V(u_0,\ldots,u_n)
:=
\frac{1}{n+1}
\int_{\Sn}
u_0D(\tau[u_1],\ldots,\tau[u_n])
\,d\sigma.
\label{eq:mixed-volume-functions}
\end{equation}
Every smooth function $u$ is a difference of smooth support
functions with positive definite $\tau$: for sufficiently large
$R>0$, both $u+R$ and $R$ have this property.
Thus \eqref{eq:mixed-volume-functions} is the multilinear extension
of the usual mixed volume and is symmetric in all its arguments.
For $u,v\in C^\infty(\Sn)$, define
\begin{equation}
\begin{split}
\mathcal V_{\alpha,h}(u,v)
&:=
V(u,v,q_\alpha(h,1))
\\
&=
\sum_{k=1}^n
k\alpha_k
V(u,v,h[k-1],1[n-k]).
\end{split}
\end{equation}
By \eqref{eq:Falpha-first-derivative},
\begin{equation}
(n+1)\mathcal V_{\alpha,h}(u,v)
=
\int_{\Sn}
u\,\cF_\alpha^{ij}(\tau[h])\tau[v]_{ij}
\,d\sigma.
\label{eq:Valpha-integral}
\end{equation}
The symmetry of $\mathcal V_{\alpha,h}$ shows that
$\cL_\alpha(h)$ is formally self-adjoint with respect to $d\sigma$.
In particular,
\[
\mathcal V_{\alpha,h}(h,h)>0,
\]
since it is a non-zero non-negative linear combination of mixed
volumes of bodies with non-empty interior.

The following is a specialization of the Hodge--Riemann
bilinear relations in \cite[Thm.~8.9(2)]{RSW}, expressed in
terms of smooth functions as in Eq.~(7) of its proof.
For completeness, we give a direct proof based on
Theorem~\ref{thm:RSW}, following the argument in
\cite[Prop.~18.35]{ACGL20}.

\begin{lemma}
\label{lem:local-generalized-AF}
For every $u\in C^\infty(\Sn)$,
\begin{equation}
\mathcal V_{\alpha,h}(u,h)^2
\geq
\mathcal V_{\alpha,h}(u,u)
\mathcal V_{\alpha,h}(h,h).
\label{eq:generalized-AF-linearized}
\end{equation}
Equality holds if and only if
\begin{equation}
u=ch+\ell
\end{equation}
for some $c\in\bbR$ and some linear function
$\ell(x)=\langle x,v\rangle$, $v\in\bbR^{n+1}$.
\end{lemma}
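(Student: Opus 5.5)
The plan is the classical perturbation argument of \cite[Prop.~18.35]{ACGL20}, applied to the global inequality of Theorem~\ref{thm:RSW} with the dually Lorentzian polynomial $q_\alpha$.

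\emph{Inequality.} Fix $u\in C^\infty(\Sn)$. For $|t|$ small, $h_t:=h+tu$ satisfies $\tau[h_t]>0$, so $h_t$ is the support function of a $C^2_+$ body $K_t$. Let $K$ be the body with support function $h$, and take $C_1=K$, $C_2=\bbB$. Apply Theorem~\ref{thm:RSW} to the pair $K_t$, $K$ with the polynomial $q_\alpha$, noting that $V(\cdot,\cdot,q_\alpha(h,1))=\mathcal V_{\alpha,h}$. This gives
\[
f(t):=\mathcal V_{\alpha,h}(h_t,h)^2-\mathcal V_{\alpha,h}(h_t,h_t)\mathcal V_{\alpha,h}(h,h)\geq0 .
\]
Because $\mathcal V_{\alpha,h}$ is bilinear and symmetric, $f$ is a quadratic polynomial in $t$. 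At $t=0$ both terms equal $\mathcal V_{\alpha,h}(h,h)^2$, so $f(0)=0$. Expanding,
\[
f(t)=t^2\Bigl(\mathcal V_{\alpha,h}(u,h)^2-\mathcal V_{\alpha,h}(u,u)\mathcal V_{\alpha,h}(h,h)\Bigr).
\]
The linear terms cancel: $2t\mathcal V(u,h)\mathcal V(h,h)-2t\mathcal V(u,h)\mathcal V(h,h)=0$. Since $f\geq0$ for small $t\neq0$, \eqref{eq:generalized-AF-linearized} follows.

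\emph{Equality, sufficiency.} If $u=ch+\ell$, then $\tau[\ell]=0$, since linear functions satisfy $\bar\nabla^2\ell+\ell\bar g=0$. Using the symmetry of the multilinear extension \eqref{eq:mixed-volume-functions}, $\mathcal V_{\alpha,h}(\ell,\cdot)=\mathcal V_{\alpha,h}(\cdot,\ell)=0$. Both sides of \eqref{eq:generalized-AF-linearized} then reduce to $c^2\mathcal V_{\alpha,h}(h,h)^2$, so equality holds.

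\emph{Equality, necessity.} This is the main obstacle, because the RSW equality characterization applies to the global inequality, not directly to its linearization. Suppose equality holds in \eqref{eq:generalized-AF-linearized}. Then $f\equiv0$, so equality holds in Theorem~\ref{thm:RSW} for $K_t$ and $K$ for every small $t$. All bodies involved are $C^2_+$, so $K_t$ is homothetic to $K$: $h+tu=\lambda_t h+\ell_t$ with $\lambda_t>0$ and $\ell_t$ linear. Fixing any single $t\neq0$ gives
\[
u=\frac{\lambda_t-1}{t}\,h+\frac{1}{t}\ell_t ,
\]
which has the required form $ch+\ell$.

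The point to check carefully is that each $K_t$ really is a $C^2_+$ convex body, so that the equality case of Theorem~\ref{thm:RSW} applies. This holds because $\tau[h_t]>0$ for small $|t|$ and $h_t$ is smooth, so $h_t$ is the support function of a smooth strictly convex hypersurface. One should also confirm that $q_\alpha$ is non-zero and dually Lorentzian of degree $n-1=(n+1)-2$, which was established just above. No separate spectral argument is needed, since $f(t)=t^2\cdot(\text{constant})$ vanishes identically once the constant is zero.
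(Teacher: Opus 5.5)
Your proof is correct and is essentially the paper's argument: the paper applies Theorem~\ref{thm:RSW} with $q_\alpha$ to the pair $u+th$ (for $t$ large) and $h$, uses bilinearity to show that the Alexandrov--Fenchel defect does not depend on $t$, and in the equality case reads off $u+th=ah+\ell$ from the homothety characterization. Your perturbation $h+tu$ with $|t|$ small is the same perturbation up to the scaling $u+th=t(h+t^{-1}u)$, and your sufficiency argument via $\tau[\ell]=0$ and symmetry matches the paper's.
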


\begin{proof}
Since $\tau[h]>0$, for sufficiently large $t>0$,
\[
\tau[u+th]=\tau[u]+t\tau[h]>0.
\]
Hence $u+th$ is the support function of a smooth convex body
with positive principal curvatures.

Since $q_\alpha$ is dually Lorentzian of degree
$n-1=(n+1)-2$, Theorem~\ref{thm:RSW} gives
\[
\mathcal V_{\alpha,h}(u+th,h)^2
\geq
\mathcal V_{\alpha,h}(u+th,u+th)
\mathcal V_{\alpha,h}(h,h).
\]
By bilinearity,
\begin{equation}
\begin{split}
&\mathcal V_{\alpha,h}(u+th,h)^2
-
\mathcal V_{\alpha,h}(u+th,u+th)
\mathcal V_{\alpha,h}(h,h)
\\
&\qquad=
\mathcal V_{\alpha,h}(u,h)^2
-
\mathcal V_{\alpha,h}(u,u)
\mathcal V_{\alpha,h}(h,h).
\end{split}
\end{equation}
This proves \eqref{eq:generalized-AF-linearized}.

If equality holds, then by Theorem~\ref{thm:RSW},
\[
u+th=ah+\ell
\]
for some $a>0$ and some linear function $\ell$.
Thus $u=(a-t)h+\ell$.

Conversely, if $u=ch+\ell$, then $\tau[\ell]=0$.
By symmetry and \eqref{eq:Valpha-integral},
$\mathcal V_{\alpha,h}(\ell,v)=0$ for every smooth $v$.
Consequently,
\[
\mathcal V_{\alpha,h}(u,h)
=
c\mathcal V_{\alpha,h}(h,h),
\qquad
\mathcal V_{\alpha,h}(u,u)
=
c^2\mathcal V_{\alpha,h}(h,h),
\]
and therefore equality holds.
\end{proof}

\subsection{A weighted spectral inequality}

We now obtain a weighted extension of the spectral inequality
in \cite[Lem.~3.1]{IM23}, including its equality case.
The proof follows the same integration-by-parts argument,
with the local Alexandrov--Fenchel inequality replaced by
Lemma~\ref{lem:local-generalized-AF}.

\begin{lemma}
\label{lem:weighted-spectral-inequality}
Let $h\in C^\infty(\Sn)$ be the support function of a smooth
convex body with positive principal curvatures and containing
the origin in its interior.
If $f\in C^\infty(\Sn)$ satisfies
\begin{equation}
\int_{\Sn}
fh
\left(
\sum_{k=1}^n k\alpha_kE_k(\tau[h])
\right)
\,d\sigma
=
0,
\label{eq:weighted-orthogonality}
\end{equation}
then
\begin{equation}
\begin{split}
&\int_{\Sn}
f^2h
\left(
\sum_{k=1}^n k\alpha_kE_k(\tau[h])
\right)
\,d\sigma
\leq
\int_{\Sn}
h^2\cF_\alpha^{ij}(\tau[h])
\bar\nabla_i f\,\bar\nabla_j f
\,d\sigma.
\end{split}
\label{eq:weighted-spectral-inequality}
\end{equation}
Equality holds if and only if there exists $v\in\bbR^{n+1}$
such that
\begin{equation}
f(x)=\frac{\langle x,v\rangle}{h(x)}
\qquad
\text{for every }x\in\Sn.
\end{equation}
\end{lemma}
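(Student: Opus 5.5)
Apply Lemma~\ref{lem:local-generalized-AF} to the test function $u:=fh$. This is the same integration-by-parts scheme as in \cite[Lem.~3.1]{IM23}.

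\textbf{Step 1: rewrite the orthogonality condition.} By \eqref{eq:Valpha-integral} and \eqref{eq:Falpha-Euler},
\[
(n+1)\mathcal V_{\alpha,h}(fh,h)
=
\int_{\Sn} fh\,\cF_\alpha^{ij}(\tau[h])\tau[h]_{ij}\,d\sigma
=
\int_{\Sn} fh\sum_{k=1}^n k\alpha_kE_k(\tau[h])\,d\sigma .
\]
So \eqref{eq:weighted-orthogonality} says exactly that $\mathcal V_{\alpha,h}(fh,h)=0$. Since $\mathcal V_{\alpha,h}(h,h)>0$, the inequality \eqref{eq:generalized-AF-linearized} then gives $\mathcal V_{\alpha,h}(fh,fh)\le 0$.

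\textbf{Step 2: expand $\mathcal V_{\alpha,h}(fh,fh)$.} Abbreviate $F^{ij}:=\cF_\alpha^{ij}(\tau[h])$. The product rule gives
\[
\tau[fh]_{ij}=f\tau[h]_{ij}+h\bar\nabla^2_{ij}f+\bar\nabla_if\bar\nabla_jh+\bar\nabla_jf\bar\nabla_ih .
\]
Multiplying by $fh\,F^{ij}$ and integrating, the first term produces
\[
\int f^2h\sum_k k\alpha_kE_k\,d\sigma .
\]
The remaining terms combine to
\[
\int F^{ij}\Bigl(fh^2\bar\nabla^2_{ij}f+2fh\,\bar\nabla_if\,\bar\nabla_jh\Bigr)\,d\sigma .
\]
This equals $\int F^{ij}\bar\nabla_i\bigl(fh^2\bar\nabla_jf\bigr)\,d\sigma-\int h^2F^{ij}\bar\nabla_if\,\bar\nabla_jf\,d\sigma$. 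The divergence-free property \eqref{eq:Falpha-divergence-free} kills the first integral after integrating by parts. Hence
\[
(n+1)\mathcal V_{\alpha,h}(fh,fh)=\int f^2h\sum_k k\alpha_kE_k\,d\sigma-\int h^2F^{ij}\bar\nabla_if\bar\nabla_jf\,d\sigma .
\]
Together with Step 1 this is \eqref{eq:weighted-spectral-inequality}. The hypothesis that the origin is interior gives $h>0$, which is needed both to form $fh$ and to write the equality case as a quotient by $h$.

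\textbf{Step 3: equality case.} If equality holds in \eqref{eq:weighted-spectral-inequality}, then $\mathcal V_{\alpha,h}(fh,fh)=0=\mathcal V_{\alpha,h}(fh,h)$, so equality holds in \eqref{eq:generalized-AF-linearized}. Lemma~\ref{lem:local-generalized-AF} then gives $fh=ch+\ell$.

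To show $c=0$, pair with $h$. Since $\mathcal V_{\alpha,h}(\ell,\cdot)=0$, we get $0=\mathcal V_{\alpha,h}(fh,h)=c\,\mathcal V_{\alpha,h}(h,h)$, and positivity of $\mathcal V_{\alpha,h}(h,h)$ forces $c=0$. Thus $f=\ell/h$ with $\ell(x)=\langle x,v\rangle$.

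Conversely, let $f=\langle x,v\rangle/h$. Then $\mathcal V_{\alpha,h}(fh,\cdot)=0$, because $\tau[\ell]=0$ and $\mathcal V_{\alpha,h}$ is symmetric. In particular $\mathcal V_{\alpha,h}(fh,fh)=0$, and by the identity in Step 2 equality holds. The orthogonality \eqref{eq:weighted-orthogonality} also holds automatically, since $\mathcal V_{\alpha,h}(fh,h)=0$.

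\textbf{Main obstacle.} The delicate point is the integration by parts in Step 2: the cross terms $2fh\,F^{ij}\bar\nabla_if\bar\nabla_jh$ must cancel exactly. This rests on $F^{ij}$ being symmetric and divergence-free, and I would check it carefully. Everything else follows directly from Lemma~\ref{lem:local-generalized-AF}.
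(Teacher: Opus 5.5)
Your proposal is correct and follows the paper's proof essentially step for step. You rewrite the orthogonality condition as $\mathcal V_{\alpha,h}(fh,h)=0$, use Lemma~\ref{lem:local-generalized-AF} to get $\mathcal V_{\alpha,h}(fh,fh)\le 0$, obtain the energy identity by expanding $\tau[fh]$ and integrating by parts against the symmetric, divergence-free tensor $\cF_\alpha^{ij}(\tau[h])$, and handle the equality case identically, forcing $c=0$ by pairing with $h$ (one trivial remark: $h>0$ is not needed to form $fh$, only to write $f=\ell/h$).
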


\begin{proof}
By \eqref{eq:Valpha-integral} and \eqref{eq:Falpha-Euler},
the orthogonality condition implies
\begin{equation}
\begin{split}
(n+1)\mathcal V_{\alpha,h}(fh,h)
&=
\int_{\Sn}
fh\sum_{k=1}^n k\alpha_kE_k(\tau[h])
\,d\sigma
=0.
\end{split}
\label{eq:Valpha-orthogonality}
\end{equation}
Since $\mathcal V_{\alpha,h}(h,h)>0$,
Lemma~\ref{lem:local-generalized-AF} implies
\begin{equation}
\mathcal V_{\alpha,h}(fh,fh)\leq0.
\label{eq:Valpha-nonpositive}
\end{equation}

By the product rule,
\begin{equation}
\begin{split}
\tau[fh]_{ij}
={}
f\tau[h]_{ij}
&+
h\bar\nabla_i\bar\nabla_j f
\\
&+
\bar\nabla_i f\,\bar\nabla_j h
+
\bar\nabla_j f\,\bar\nabla_i h.
\end{split}
\end{equation}
Therefore, by \eqref{eq:Falpha-Euler},
\begin{equation}
\begin{split}
fh\,\cF_\alpha^{ij}(\tau[h])\tau[fh]_{ij}
={}&
f^2h\sum_{k=1}^n k\alpha_kE_k(\tau[h])
+
fh^2\cF_\alpha^{ij}(\tau[h])
\bar\nabla_i\bar\nabla_j f
\\
&+
2fh\,\cF_\alpha^{ij}(\tau[h])
\bar\nabla_i f\,\bar\nabla_j h.
\end{split}
\end{equation}
Using \eqref{eq:Falpha-divergence-free}, we integrate by parts.
The mixed terms cancel, and we are left with
\begin{equation}
\begin{split}
(n+1)\mathcal V_{\alpha,h}(fh,fh)
={}&
\int_{\Sn}
f^2h\sum_{k=1}^n k\alpha_kE_k(\tau[h])
\,d\sigma
\\
&-
\int_{\Sn}
h^2\cF_\alpha^{ij}(\tau[h])
\bar\nabla_i f\,\bar\nabla_j f
\,d\sigma.
\end{split}
\label{eq:weighted-energy-identity}
\end{equation}
Combining this with \eqref{eq:Valpha-nonpositive} proves
\eqref{eq:weighted-spectral-inequality}.

If equality holds, then
$\mathcal V_{\alpha,h}(fh,fh)=0$.
Together with \eqref{eq:Valpha-orthogonality}, this is equality
in \eqref{eq:generalized-AF-linearized}.
By Lemma~\ref{lem:local-generalized-AF}, we therefore have
\[
fh=ch+\ell
\]
for some constant $c$ and some linear function $\ell$.
Since $\mathcal V_{\alpha,h}(\ell,h)=0$, we have
\[
0
=
\mathcal V_{\alpha,h}(fh,h)
=
c\mathcal V_{\alpha,h}(h,h),
\]
so $c=0$. Thus $f=\ell/h$.
Conversely, if $f=\ell/h$ with $\ell$ linear, then
$\tau[fh]=\tau[\ell]=0$.
By symmetry and \eqref{eq:Valpha-integral},
\[
\mathcal V_{\alpha,h}(fh,h)=0,
\qquad
\mathcal V_{\alpha,h}(fh,fh)=0.
\]
The first identity is the orthogonality condition.
By the second and \eqref{eq:weighted-energy-identity},
equality holds in \eqref{eq:weighted-spectral-inequality}.
\end{proof}

\begin{rem}
When only one coefficient $\alpha_k$ is non-zero,
Lemma~\ref{lem:weighted-spectral-inequality} reduces to
\cite[Lem.~3.1]{IM23}, after accounting for the normalization of
$E_k$.
For several non-zero coefficients, the weighted orthogonality
condition \eqref{eq:weighted-orthogonality} does not imply
orthogonality with respect to each measure
$hE_k(\tau[h])\,d\sigma$ separately.
Thus the result does not follow by simply adding the individual
spectral inequalities.
\end{rem}

\subsection{The kernel of the linearized operator}

\begin{prop}
For every closed smooth convex hypersurface $\cM$ with positive
principal curvatures,
\begin{equation}
\ker\bigl(\cL_\alpha(h_{\cM})\bigr)
=
\left\{
\langle\cdot,v\rangle:
v\in\bbR^{n+1}
\right\}
=:
\mathcal H_1.
\end{equation}
\end{prop}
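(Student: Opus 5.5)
The inclusion $\mathcal H_1\subseteq\ker\cL_\alpha(h_{\cM})$ is immediate. For a linear function $\ell(x)=\langle x,v\rangle$, the restriction to $\Sn$ satisfies $\bar\nabla^2\ell=-\ell\,\bar g$, so $\tau[\ell]=0$. The first identity in \eqref{eq:Falpha-linearization} then gives $\cL_\alpha(h)[\ell]=0$.

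For the reverse inclusion, let $\psi\in C^\infty(\Sn)$ satisfy $\cL_\alpha(h)[\psi]=0$, where $h=h_{\cM}$. The plan is to show that $\psi$ attains equality in Lemma~\ref{lem:local-generalized-AF}, and then to remove the $h$-component.

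\textbf{Step 1: all pairings with $\psi$ vanish.} Because $\cF_\alpha^{ij}(\tau[h])\tau[\psi]_{ij}=\cL_\alpha(h)[\psi]=0$ pointwise, \eqref{eq:Valpha-integral} gives
\[
(n+1)\mathcal V_{\alpha,h}(u,\psi)=\int_{\Sn}u\,\cL_\alpha(h)[\psi]\,d\sigma=0
\qquad\text{for every smooth }u.
\]
In particular $\mathcal V_{\alpha,h}(\psi,h)=0$ and $\mathcal V_{\alpha,h}(\psi,\psi)=0$. Equality therefore holds in \eqref{eq:generalized-AF-linearized}, since both sides are zero.

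\textbf{Step 2: apply the equality case.} By Lemma~\ref{lem:local-generalized-AF}, $\psi=ch+\ell$ for some $c\in\bbR$ and some linear $\ell$.

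\textbf{Step 3: show $c=0$.} Since $\mathcal V_{\alpha,h}(\ell,h)=0$ (as $\tau[\ell]=0$),
\[
0=\mathcal V_{\alpha,h}(\psi,h)=c\,\mathcal V_{\alpha,h}(h,h).
\]
Because $\mathcal V_{\alpha,h}(h,h)>0$, we get $c=0$, so $\psi=\ell\in\mathcal H_1$. Alternatively, $\cL_\alpha(h)[h]=\sum_k k\alpha_kE_k(\tau[h])>0$ by \eqref{eq:Falpha-Euler}, so $0=\cL_\alpha(h)[\psi]=c\,\cL_\alpha(h)[h]$ forces $c=0$ directly.

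The only delicate point is whether Lemma~\ref{lem:local-generalized-AF} applies to an arbitrary $h$, with no assumption that the origin lies in the interior. That lemma only uses $\tau[h]>0$ and holds for every $u\in C^\infty(\Sn)$, so no translation normalization is needed. One also has to make sure the equality case is applied to the pair $(\psi,h)$ with $\mathcal V_{\alpha,h}(h,h)>0$, which holds. No regularity issue arises, since all functions are smooth.
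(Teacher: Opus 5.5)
Your proposal is correct and follows the paper's proof essentially step for step: the identity \eqref{eq:Valpha-integral} gives $\mathcal V_{\alpha,h}(\psi,h)=\mathcal V_{\alpha,h}(\psi,\psi)=0$, the equality case of Lemma~\ref{lem:local-generalized-AF} then yields $\psi=ch+\ell$, and one shows $c=0$. Your alternative ending via $\cL_\alpha(h)[h]=\sum_k k\alpha_kE_k(\tau[h])>0$ is exactly the paper's argument, and your primary ending via $\mathcal V_{\alpha,h}(h,h)>0$ is an equally valid variant. Passing from $\mathcal V_{\alpha,h}(h,\psi)=0$ to $\mathcal V_{\alpha,h}(\psi,h)=0$ uses the symmetry of $\mathcal V_{\alpha,h}$, which you should state explicitly.
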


\begin{proof}
Write $h:=h_{\cM}$.
For every linear function $\ell(x)=\langle x,v\rangle$,
$\tau[\ell]=0$, so \eqref{eq:Falpha-linearization} gives
$\cL_\alpha(h)[\ell]=0$.
Thus
\[
\mathcal H_1\subseteq\ker\bigl(\cL_\alpha(h)\bigr).
\]

Conversely, let $\psi\in\ker(\cL_\alpha(h))$.
By \eqref{eq:Valpha-integral}, for every $u\in C^\infty(\Sn)$,
\begin{equation}
(n+1)\mathcal V_{\alpha,h}(u,\psi)
=
\int_{\Sn}
u\,\cL_\alpha(h)[\psi]\,d\sigma
=
0.
\end{equation}
Taking $u=h$ and $u=\psi$ and using symmetry, we then have
\[
\mathcal V_{\alpha,h}(\psi,h)=0,
\qquad
\mathcal V_{\alpha,h}(\psi,\psi)=0.
\]
Equality holds in \eqref{eq:generalized-AF-linearized}, so
Lemma~\ref{lem:local-generalized-AF} implies
\[
\psi=ch+\ell
\]
for some $c\in\bbR$ and some linear function $\ell$.
Since $\cL_\alpha(h)[\ell]=0$,
\[
0
=
\cL_\alpha(h)[\psi]
=
c\,\cL_\alpha(h)[h].
\]
By \eqref{eq:Falpha-Euler},
\[
\cL_\alpha(h)[h]
=
\sum_{k=1}^n k\alpha_kE_k(\tau[h])
>
0.
\]
Therefore $c=0$, so $\psi=\ell\in\mathcal H_1$.
\end{proof}

\section*{Acknowledgments}

The author would like to thank M. N. Ivaki for valuable discussions
and helpful comments during the preparation of this paper.
The author was supported by the Austrian Science Fund (FWF)
under Project P36545.
The author prepared this manuscript with the aid of AI.


\begin{thebibliography}{99}

\bibitem{ACGL20}
B.~Andrews, B.~Chow, C.~Guenther, M.~Langford,
\emph{Extrinsic Geometric Flows},
Graduate Studies in Mathematics, vol.~206,
American Mathematical Society, Providence, RI, 2020.

\bibitem{BrandenHuh}
P.~Br\"and\'en, J.~Huh,
\emph{Lorentzian polynomials},
Ann. of Math. (2) \textbf{192} (2020), no.~3, 821--891.

\bibitem{Chern}
S.-S.~Chern,
\emph{Integral formulas for hypersurfaces in Euclidean space
and their applications to uniqueness theorems},
J. Math. Mech. \textbf{8} (1959), 947--955.

\bibitem{Firey74}
W.~J.~Firey,
\emph{Kinematic measures for sets of support figures},
Mathematika \textbf{21} (1974), 270--281.

\bibitem{FireyICM74}
W.~J.~Firey,
\emph{Some open questions on convex surfaces},
Proceedings of the International Congress of Mathematicians
(Vancouver, B.C., 1974), Vol.~1,
Canadian Mathematical Congress, Montreal, 1975, 479--484.

\bibitem{HuiskenSinestrari99}
G.~Huisken, C.~Sinestrari,
\emph{Convexity estimates for mean curvature flow and singularities
of mean convex surfaces},
Acta Math. \textbf{183} (1999), 45--70.

\bibitem{Ivaki23}
M.~N.~Ivaki,
\emph{Uniqueness of solutions to a class of non-homogeneous
curvature problems},
arXiv:2307.06252, 2023.

\bibitem{IM23}
M.~N.~Ivaki and E.~Milman,
\emph{Uniqueness of solutions to a class of isotropic curvature problems},
Adv. Math. \textbf{435} (2023), Art.~109350.

\bibitem{RSW}
J.~Ross, H.~S\"u{\ss}, T.~Wannerer,
\emph{Dually Lorentzian polynomials},
Monatsh. Math. \textbf{208} (2025), 495--524.

\bibitem{Schneider75}
R.~Schneider,
\emph{Kinematische Ber\"uhrma{\ss}e f\"ur konvexe K\"orper},
Abh. Math. Sem. Univ. Hamburg \textbf{44} (1975), 12--23.

\bibitem{Schneider76}
R.~Schneider,
\emph{Bestimmung eines konvexen K\"orpers durch gewisse
Ber\"uhrma{\ss}e},
Arch. Math. (Basel) \textbf{27} (1976), 99--105.

\bibitem{Schneider14}
R.~Schneider,
\emph{Convex Bodies: The Brunn--Minkowski Theory},
2nd expanded ed.,
Encyclopedia of Mathematics and its Applications, vol.~151,
Cambridge University Press, Cambridge, 2014.

\end{thebibliography}
\end{document}